\documentclass[11pt]{article}
\usepackage[T1]{fontenc}
\usepackage[utf8]{inputenc}
\usepackage{lmodern,microtype}
\usepackage{amsmath,amssymb,amsthm,mathtools}
\usepackage{enumitem}
\usepackage[margin=1in]{geometry}
\usepackage[hidelinks]{hyperref}
\usepackage{aliascnt}
\usepackage[nameinlink,noabbrev]{cleveref}

\crefname{lemma}{lemma}{lemmas}
\Crefname{lemma}{Lemma}{Lemmas}
\crefname{remark}{remark}{remarks}
\Crefname{remark}{Remark}{Remarks}
\newtheorem{theorem}{Theorem}
\newaliascnt{lemma}{theorem}
\newtheorem{lemma}[lemma]{Lemma}
\aliascntresetthe{lemma}
\theoremstyle{remark}
\newaliascnt{remark}{theorem}
\newtheorem{remark}[remark]{Remark}
\aliascntresetthe{remark}

\newcommand{\E}{\mathbb E}
\newcommand{\Pp}{\mathbb P}
\newcommand{\Z}{\mathbb Z}
\newcommand{\N}{\mathbb N}
\newcommand{\1}{\mathbf 1}
\newcommand{\cE}{\mathcal E}
\newcommand{\cI}{\mathcal I}
\newcommand{\cF}{\mathcal F}
\newcommand{\Cov}{\operatorname{Cov}}

\title{Independence Is Not Always Consistently Testable}
\author{Senhan Yao}
\date{August 8, 2026}

\begin{document}
\maketitle

\begin{abstract}
We study the problem of testing independence between the coordinate processes of a jointly stationary ergodic binary process from finite observations. We prove that no test is pointwise consistent in probability: any procedure whose power tends to one against every dependent law must have nonvanishing false-positive probability on some independent stationary ergodic law along infinitely many sample sizes. Quantitatively, for some jointly stationary ergodic law with independent coordinate processes, the false-positive probability has limsup at least $1/2$. Thus, even under stationarity and ergodicity, independence cannot be consistently decided from increasingly long finite samples. The proof combines a diagonal construction with rare markers that create detectable dependence at selected scales while converging to a limiting process whose coordinates are independent.
\end{abstract}

\section{Introduction}

Let $(X_t,Y_t)_{t\in\Z}$ be a binary-valued jointly stationary ergodic
process. We ask whether one can decide from
\[
 (X_1,Y_1),\ldots,(X_n,Y_n)
\]
whether the two entire coordinate processes $(X_t)_{t\in\Z}$ and
$(Y_t)_{t\in\Z}$ are independent.

Our main result is negative even under pointwise consistency in
probability. More quantitatively, \cref{thm:main} shows that any test whose
rejection probability tends to one under every dependent jointly stationary
ergodic law must have false-positive probability with limsup at least $1/2$
under some jointly stationary ergodic law with independent coordinate
processes. Thus even convergence in probability of the binary decision is
impossible; a fortiori, almost-sure eventual correctness is impossible.

The coordinate-independence problem was formulated explicitly by Ryabko
\cite{Ryabko2012}, who obtained positive results when the independence null is
restricted to fixed-order Markov or hidden Markov classes and noted that the
unrestricted stationary-ergodic case was unresolved. Ryabko
\cite{Ryabko2017}, later summarized in \cite[Sec.~5.6.5]{Ryabko2019}, proved
that no $\alpha$-level consistent independence test exists for jointly
stationary ergodic samples. This notion bounds type-I error by $\alpha$ at every sample size while
requiring almost-sure eventual
rejection under every dependent law; \cite{Ryabko2019} calls it asymmetric
consistency. Ryabko explicitly left open the existence of an asymptotically consistent
independence test in the unrestricted stationary-ergodic setting: in
\cite[Sec.~5.7.3, Table~5.2]{Ryabko2019} this case is marked ``Open question.''
In Ryabko's terminology, asymptotic consistency means almost-sure eventual
correctness, while \cite[Sec.~5.7.1]{Ryabko2019} calls the corresponding
convergence-in-probability notion weak asymptotic consistency. Our theorem
rules out even the latter and therefore answers the Table~5.2 open question
negatively.

General hypothesis testing for stationary ergodic families was studied earlier by Nobel \cite{Nobel2006}. Related
finite-horizon approximation methods for nonparametric testability of
ergodicity and neighboring properties were developed by Loh
\cite{Loh2024}.

This problem should not be confused with testing whether a \emph{single}
stationary ergodic process is i.i.d. For that problem, Morvai and Weiss
\cite{MorvaiWeiss2011} prove a strongly consistent test, and Khaleghi and Lugosi
\cite{KhaleghiLugosi2023} obtain another strongly consistent
``independence'' test via estimation of mixing coefficients, explicitly
recovering the Morvai--Weiss result. In those works, ``independence'' means
serial independence within one process; here it means independence between
the two coordinate processes as random elements.

The distinction between testing notions is substantive. Yao \cite{Yao2026}
showed that, for general pairs of hypotheses on stationary ergodic binary
processes, convergence in probability and almost-sure eventual correctness
need not be equivalent. Thus the coordinate-independence question cannot be
settled merely from impossibility under a stronger testing notion.

The proof combines the rare-marker shielding strategy of \cite{Yao2026} with
an independence-specific coupling. At each stage, a common period-two tail
environment makes a provisional law dependent; after the test's sample size
is fixed, \cref{lem:coupling} replaces it by two \emph{independent}
coprime-period environments that still agree with it on the prescribed block
with probability nearly $1/2$, while rare markers shield earlier horizons.

We use two-sided processes because the ergodic constructions are most
transparent in that notation. Since every test below sees only coordinates
$1,\ldots,n$, the result immediately implies the usual one-sided formulation.

\section{Model and main theorem}

Throughout, $\N=\{1,2,\ldots\}$. Let
$\Omega=(\{0,1\}^2)^{\Z}$ with its product $\sigma$-field, and let
$T:\Omega\to\Omega$ be the left shift,
\[
 T((x_t,y_t)_{t\in\Z})=((x_{t+1},y_{t+1}))_{t\in\Z}.
\]
A probability law $P$ on $\Omega$ is \emph{stationary} if
$P\circ T^{-1}=P$, and it is \emph{ergodic} if every measurable $A$
satisfying $P(A\triangle T^{-1}A)=0$ has $P(A)\in\{0,1\}$. Let
$\cE$ denote the stationary ergodic laws on $\Omega$. For $P\in\cE$,
write $(X_t,Y_t)_{t\in\Z}\sim P$. Let $\cI\subset\cE$ be the laws for
which
\[
 \sigma(X_t:t\in\Z)\quad\text{and}\quad \sigma(Y_t:t\in\Z)
\]
are independent.
Joint ergodicity is part of the ambient model: independence and marginal
ergodicity of the two coordinate processes do not in general imply
ergodicity of their joint product law.

A test is a sequence of measurable maps
\[
 \phi_n:(\{0,1\}^2)^n\to\{0,1\},
\]
where $0$ means independent and $1$ means dependent. As usual, the
notation $P\{\phi_n=1\}$ means
\[
 P\!\left\{
 \phi_n\bigl((X_1,Y_1),\ldots,(X_n,Y_n)\bigr)=1
 \right\}.
\]
We call $(\phi_n)$ pointwise consistent in probability if
\begin{align}
 P\{\phi_n=1\}&\to0 &&(P\in\cI), \label{eq:null}\\
 P\{\phi_n=1\}&\to1 &&(P\in\cE\setminus\cI). \label{eq:alt}
\end{align}
We call it pointwise strongly consistent if
$\phi_n\to0$ almost surely under every $P\in\cI$ and
$\phi_n\to1$ almost surely under every
$P\in\cE\setminus\cI$. Thus, for this binary testing problem, pointwise
consistency in probability is Ryabko's weak asymptotic consistency, while
pointwise strong consistency is his asymptotic consistency
\cite[Secs.~5.2.3 and 5.7.1]{Ryabko2019}.

\begin{theorem}\label{thm:main}
Let $(\phi_n)$ be any sequence of tests satisfying
\eqref{eq:alt} for every $P\in\cE\setminus\cI$. Then there exists
$P^\star\in\cI$ such that
\[
 \limsup_{n\to\infty}P^\star\{\phi_n=1\}\ge\frac12.
 \label{eq:quantitative-main}
\]
Consequently, no test of $\cI$ against $\cE\setminus\cI$ is pointwise
consistent in probability. In particular, no pointwise strongly
consistent test exists.
\end{theorem}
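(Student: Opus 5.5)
We build $P^\star$ by a diagonal construction against the given test $(\phi_n)$. We define stationary ergodic laws $P_1,P_2,\ldots$, sample sizes $n_1<n_2<\cdots$, and scales $\epsilon_k\downarrow0$. Each $P_k$ lies in $\cI$. The laws converge in the finite-dimensional sense to a limit $P^\star\in\cI$, and for every $k$,
\[
 P^\star\{\phi_{n_k}=1\}\ \ge\ \tfrac12-\epsilon_k .
\]
This gives $\limsup_n P^\star\{\phi_n=1\}\ge 1/2$. The consequences in the theorem then follow at once. If the test were consistent in probability, \eqref{eq:alt} would hold, and the bound would contradict \eqref{eq:null} at $P^\star$. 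Strong consistency implies consistency in probability by bounded convergence.

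\emph{Stage $k$.} Start from the current independent law $P_{k-1}$, which is built from rare-marker blocks. Modify it into a provisional law $Q_k\in\cE\setminus\cI$. To do this, insert a new, much rarer marker symbol. After a marker, both coordinates follow a common period-two tail environment, i.e.\ the same random phase drives $X$ and $Y$ alternating patterns, so the two coordinates become genuinely dependent. The new markers are sparse enough that the distribution of $(X_1,Y_1),\dots,(X_m,Y_m)$ under $Q_k$ is within $\epsilon_k$ in total variation of that under $P_{k-1}$, for all $m\le n_{k-1}$. This is the shielding: earlier sample sizes essentially see the old law. Ergodicity of $Q_k$ is checked through the standard construction, as a stationary renewal/skyscraper process over an ergodic base, together with aperiodicity of the marker gaps. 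Since $Q_k\notin\cI$, \eqref{eq:alt} gives some $n_k>n_{k-1}$ with $Q_k\{\phi_{n_k}=1\}\ge 1-\epsilon_k$.

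\emph{Replacing $Q_k$ by an independent law.} With $n_k$ now fixed, apply \cref{lem:coupling}. It replaces the common period-two environment by two independent environments, one for $X$ and one for $Y$, with coprime periods. The resulting law $P_k$ is in $\cI$ and ergodic: independent coprime-period rotations have an ergodic product, and this is exactly where the coprime choice matters. Moreover, $P_k$ agrees with $Q_k$ on the length-$n_k$ window with probability at least about $1/2$. Roughly, the two independent phases align with the common-phase picture on half of the outcomes, and coprimality gives equidistribution of the relative phase on long blocks. Hence $P_k\{\phi_{n_k}=1\}\ge \tfrac12-2\epsilon_k$. The dependence of $Q_k$ is destroyed, yet the test's decision at scale $n_k$ is inherited with weight $1/2$.

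\emph{Passage to the limit.} Future stages use markers whose frequency tends to zero fast enough that
\[
 \|P_j-P_k\|_{\mathrm{TV},\,n_k}\le\sum_{i>k}\epsilon_i\qquad (j>k),
\]
where the norm is total variation on windows of length $n_k$. The $P_k$ then form a Cauchy sequence on every finite window and converge to a stationary $P^\star$ with $P^\star\{\phi_{n_k}=1\}\ge\tfrac12-3\epsilon_k$.

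\textbf{Main obstacle.} The hard part is showing that $P^\star\in\cI\cap\cE$. Independence of the two coordinate $\sigma$-fields passes to the weak limit, because it is a statement about cylinder probabilities factoring, and these converge. Ergodicity does not pass to weak limits in general. I would handle it by arranging that the construction has a consistent, nested tower or cutting-and-stacking description. Each $P_k$ is then a factor-like refinement of the previous stage, so $P^\star$ is a genuine cutting-and-stacking (rank-type) limit with marker frequencies summable. Ergodicity then follows by verifying that empirical frequencies of every fixed cylinder concentrate uniformly in $k$, using the mean ergodic theorem at each stage together with the summable approximation errors.
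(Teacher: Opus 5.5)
\textbf{The gap: ergodicity of the limit.} Your diagonal skeleton matches the paper's. You use a provisional dependent law with a common period-two environment on new rare marker levels, choose $n_k$ from \eqref{eq:alt}, then split that environment into independent coprime-period ones via \cref{lem:coupling}, with rare markers shielding earlier horizons. The single-stage estimate is also fine. Let $F$ be the rejection event for the provisional sample and $D$ the agreement event of \cref{lem:coupling}. A union bound gives $\Pp(F\cap D)\ge\Pp(D)-\Pp(F^c)\ge\frac12-2\epsilon_k$.

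The gap is the step you yourself call the main obstacle. You obtain $P^\star$ only as a finite-dimensional limit of the $P_k$, and you never prove $P^\star\in\cE$, which is part of the claim $P^\star\in\cI$. Your suggested remedy is uniform-in-$k$ concentration of cylinder frequencies ``using the mean ergodic theorem at each stage''. That does not work as stated. The mean ergodic theorem gives no rates. The stage-$k$ periods must exceed $(2n_k-1)/\epsilon_k$, so under $P_k$ the averages stabilize only on time scales that grow with $k$. Any uniformity would have to be extracted quantitatively from the small total weight of the high marker levels, and you do not attempt this.

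The missing idea is to avoid the weak limit entirely. Put both marker processes, all stage phases $(\beta_s)$, and independent seeds $(\xi_s)$ realizing the reverse kernel of \cref{rem:reverse} on one product space. Then define $P^\star$ directly by reading $U_s,V_s$ on the disjoint level blocks $I_s$. This gives three things at once:
\begin{itemize}
\item $P^\star$ is a shift-commuting factor of the product-uniform rotation on $\prod_s(\Z_{p_s}\times\Z_{q_s})$ (ergodic by \cref{lem:coprime}) times the i.i.d.\ marker shift (mixing), hence ergodic by \cref{lem:product,lem:iidfactor}.
\item Independence is immediate from independence of $\sigma(U_s,K^X_t:s\ge1,t\in\Z)$ and $\sigma(V_s,K^Y_t:s\ge1,t\in\Z)$.
\item The bound at $n_s$ comes from exact coincidence of the provisional and final samples on $G_s\cap D_s$ (\cref{lem:identity}), with no total-variation Cauchy bookkeeping and no intermediate laws $P_k$.
\end{itemize}

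\textbf{The markers must be independent across coordinates.} This point needs to be made explicit, as with the paper's $K^X$ and $K^Y$. Your description (``insert a new marker symbol; after a marker, both coordinates follow a common period-two environment'') reads as one marker process shared by $X$ and $Y$. In that case, replacing the common environment by independent $U_k,V_k$ does not produce a law in $\cI$, because the shared marker positions are a common cause. Already at a single time this shows up. Suppose the marker fires with probability $\rho$ and selects $U_k,V_k$ instead of old parts $X'_t,Y'_t$, all mutually independent. Then $\Cov(X_t,Y_t)=\rho(1-\rho)(\E U_{k,t}-\E X'_t)(\E V_{k,t}-\E Y'_t)$. This is generally nonzero, since $\Pp(C^{(r)}_1=1)=(r-1)/(2r)$ depends on $r$.

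With independent markers, the dependence of $Q_k$ comes only from both markers selecting the new levels simultaneously (the paper's covariance $\eta_s^2/4$). That is exactly what disappears after the coupling.

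\textbf{A minor point.} Your heuristic for \cref{lem:coupling} is off. Coprimality plays no role in the block agreement. Agreement comes from the periods being large relative to $n_k$, so that the window misses the single defect. Coprimality is needed only for ergodicity.
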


The law $P^\star$ is allowed to depend on the test sequence $(\phi_n)$.
The final implication follows because almost-sure convergence of the
$\{0,1\}$-valued decisions implies convergence in probability.

\begin{remark}[Randomized tests]\label{rem:randomized}
The same conclusion holds for tests with external randomization
independent of the data, with probabilities taken jointly over the data
and the randomization. Represent all such randomization by an auxiliary
random variable $R$ and write the decision as
$\phi_n((X_1,Y_1),\ldots,(X_n,Y_n);R)$. In the diagonal coupling below,
use the same $R$ for the provisional and final samples; when
$\mathcal H_s$ is introduced, condition instead on
$\mathcal H_s\vee\sigma(R)$. Since $R$ is independent of all stage
environments, the conditional-independence calculations are unchanged.
\end{remark}

\section{A finite-horizon coupling}

For $b\in\{0,1\}$ and $n\ge1$, let
\[
 a_b^{(n)}=(b,1-b,b,1-b,\ldots)\in\{0,1\}^n.
\]
For an odd integer $r\ge3$, define the cyclic word
$c^{(r)}=(c_0,\ldots,c_{r-1})$ by $c_j=j\bmod2$. Since $r$ is odd,
the cyclic word alternates except at the unique wrap-around defect
$00$. If $\Theta_r$ is uniform on $\Z_r$, put
\[
 C_t^{(r)}=c_{\Theta_r+t-1\;(\mathrm{mod}\ r)},\qquad t\in\Z.
\]
This process is stationary and ergodic because the shift acts
transitively on its $r$ phases.

We also fix the phase convention for period two. For
$b\in\{0,1\}$, let
\[
 w_t^{(b)}=b\oplus((t-1)\bmod2),\qquad t\in\Z,
\]
where $\oplus$ denotes addition modulo $2$. A \emph{uniform-phase
period-two process} is $W=w^{(\beta)}$, where
$\beta\sim\operatorname{Bernoulli}(1/2)$. Thus
\[
 W_{1:n}=a_\beta^{(n)}.
 \label{eq:Wphase}
\]

\begin{lemma}[Alternating-block coupling]\label{lem:coupling}
Let $n\ge1$ and $0<\delta<1/2$. Let $p,q$ be distinct odd primes
satisfying
\[
 p,q>\frac{2n-1}{\delta}.
\]
There exists a coupling $(U,V,W)$ such that
\begin{enumerate}[label=(\roman*)]
\item $U$ has law $C^{(p)}$ and $V$ has law $C^{(q)}$;
\item $U$ and $V$ are independent of each other;
\item $W$ is a uniform-phase period-two process; and
\item for each $b\in\{0,1\}$,
\[
 \Pp\!\left(
 U_{1:n}=V_{1:n}=W_{1:n}\mid \beta=b
 \right)\ge\frac12-\delta,
\]
where $\beta$ is the phase of $W$.
\end{enumerate}
Since $W_1=\beta$, we have $\sigma(W)=\sigma(\beta)$. Thus (iv) is equivalent to
\[
 \Pp\!\left(U_{1:n}=V_{1:n}=W_{1:n}\mid W\right)
 \ge\frac12-\delta
 \qquad\text{almost surely}.
\]
\end{lemma}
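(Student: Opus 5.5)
The plan is to take $U$ and $V$ independent from the start, with their natural uniform phases. Then $\beta$ is built as a randomized function of $(U,V)$, chosen so that $\beta$ is exactly uniform and agrees with the common alternating pattern of $U_{1:n}$ and $V_{1:n}$ whenever there is one. Properties (i)--(iii) then hold by construction, and only (iv) needs a computation. The hypothesis that $p,q$ are distinct primes is not needed for this lemma; only $p,q>(2n-1)/\delta$ is used.

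\textbf{Step 1: counting alternating windows.} Fix $\Theta_p$ uniform on $\Z_p$, so $U_t=c_{\Theta_p+t-1 \bmod p}$. Note that $p>(2n-1)/\delta>n$. The window $U_{1:n}$ reads the $n$ consecutive cyclic positions $\Theta_p,\dots,\Theta_p+n-1$. It is alternating exactly when it does not contain the defect pair at positions $(p-1,0)$. The windows that contain this pair are those with $\Theta_p\in\{p-n+1,\dots,p-1\}$, which is $n-1$ phases. So there are $p-n+1$ alternating windows, with starting positions $\Theta_p\in\{0,\dots,p-n\}$. An alternating window equals $a_b^{(n)}$ exactly when $\Theta_p\equiv b \pmod 2$. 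Among $p-n+1$ consecutive integers, each parity occurs at least $\lfloor (p-n+1)/2\rfloor\ge (p-n)/2$ times. Hence
\[
 \Pp\bigl(U_{1:n}=a_b^{(n)}\bigr)\ge \frac{p-n}{2p}=\frac12\Bigl(1-\frac np\Bigr),\qquad b\in\{0,1\},
\]
and the same bound holds for $V$ with $q$ in place of $p$.

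\textbf{Step 2: the coupling.} Take $\Theta_p,\Theta_q$ independent and uniform, and let $E_b=\{U_{1:n}=V_{1:n}=a_b^{(n)}\}$. The events $E_0,E_1$ are disjoint, since $a_0^{(n)}\neq a_1^{(n)}$. By independence, $\pi_b:=\Pp(E_b)\ge \frac14(1-n/p)(1-n/q)$. Also $\pi_b\le \Pp(U_{1:n}=a_b^{(n)})\le 1/2$.

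Let $\xi$ be an auxiliary uniform variable independent of $(\Theta_p,\Theta_q)$. Define $\beta=b$ on $E_b$. On $(E_0\cup E_1)^c$, use $\xi$ to set $\beta=1$ with conditional probability $x_1/(1-\pi_0-\pi_1)$ and $\beta=0$ otherwise, where $x_b:=\tfrac12-\pi_b\ge0$. Since $x_0+x_1=1-\pi_0-\pi_1$, this gives $\Pp(\beta=b)=\pi_b+x_b=\tfrac12$. (If $1-\pi_0-\pi_1=0$ the case is vacuous.) Set $W=w^{(\beta)}$. This is a uniform-phase period-two process, and $W_{1:n}=a_\beta^{(n)}$.

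\textbf{Step 3: verifying (iv).} On $\{\beta=b\}$ we have $W_{1:n}=a_b^{(n)}$, and $E_b\subseteq\{\beta=b\}$. Therefore
\[
 \Pp\bigl(U_{1:n}=V_{1:n}=W_{1:n}\mid\beta=b\bigr)\ge \frac{\Pp(E_b)}{\Pp(\beta=b)}=2\pi_b\ge\frac12\Bigl(1-\frac np\Bigr)\Bigl(1-\frac nq\Bigr)\ge \frac12-\frac n{2p}-\frac n{2q}.
\]
Since $p,q>(2n-1)/\delta\ge n/\delta$, the right-hand side exceeds $\tfrac12-\delta$. The almost-sure reformulation in the statement is immediate because $\sigma(W)=\sigma(\beta)$.

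The only delicate point is Step 1: checking that the defect-avoiding windows split nearly evenly between the two starting symbols, with no loss beyond $O(n/p)$. Once that count is in place, the redistribution in Step 2 is routine.
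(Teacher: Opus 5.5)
Your construction is the same as the paper's. You sample $U,V$ independently, set $\beta=b$ on the disjoint events $E_b=\{U_{1:n}=V_{1:n}=a_b^{(n)}\}$, complete $\beta$ with fresh randomness on $(E_0\cup E_1)^c$ so that it is exactly uniform, and bound the conditional probability by $2\Pp(E_b)$. Your window count in Step 1 is slightly sharper than the paper's: it loses $n/(2p)$ rather than $(2n-1)/(2p)$. Step 3 is correct.

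There is one false step, in Step 2. You claim $\Pp(U_{1:n}=a_b^{(n)})\le 1/2$, but this fails for $n=1$ and $b=0$. In that case all $p$ windows are alternating and $(p+1)/2$ of them start at an even position, so $\Pp(U_1=0)=(p+1)/(2p)>1/2$. For $n\ge 2$ your bound does hold, since at most $\lceil (p-n+1)/2\rceil\le (p-1)/2$ of the admissible starting phases have a given parity.

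The conclusion you need is still true. You need $\pi_b\le 1/2$ so that $x_b\ge 0$ and your completion probability $x_1/(1-\pi_0-\pi_1)$ lies in $[0,1]$. As the paper does, use independence: $\pi_b\le \Pp(U_1=b)\,\Pp(V_1=b)\le \frac{p+1}{2p}\cdot\frac{q+1}{2q}\le \frac49$. This also shows your degenerate case $1-\pi_0-\pi_1=0$ never occurs. With this one-line repair the proof is complete.
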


\begin{proof}
Fix $r\in\{p,q\}$ and $b\in\{0,1\}$. A length-$n$ block of
$C^{(r)}$ fails to alternate only if it crosses the unique cyclic
defect, which can happen for at most $n-1$ starting phases. Among the
$r$ phases, at least $(r-1)/2$ begin with $b$. Therefore
\[
 \Pp\{C_{1:n}^{(r)}=a_b^{(n)}\}
 \ge \frac{(r-1)/2-(n-1)}r
 =\frac12-\frac{2n-1}{2r}
 >\frac12-\frac{\delta}{2},
\]
where the last inequality follows from the assumed lower bound on $r$.

Initially sample $U=C^{(p)}$ and $V=C^{(q)}$ independently, and define
\[
 E_b=\{U_{1:n}=V_{1:n}=a_b^{(n)}\},\qquad
 \alpha_b=\Pp(E_b).
\]
By the preceding bound and independence,
\[
 \alpha_b>
 \left(\frac12-\frac{\delta}{2}\right)^2.
\]
We also have $\alpha_b<1/2$. Indeed,
\[
 \alpha_b
 \le \Pp(U_1=b)\Pp(V_1=b).
\]
For every odd $r\ge3$, the word $c^{(r)}$ contains either
$(r+1)/2$ or $(r-1)/2$ occurrences of a given bit, so
\[
 \Pp(C_1^{(r)}=b)\le\frac{r+1}{2r}\le\frac23.
\]
Consequently,
\[
 \alpha_b\le\frac49<\frac12.
\]

The events $E_0$ and $E_1$ are disjoint. Define
\[
 \lambda
 =\frac{\frac12-\alpha_0}{1-\alpha_0-\alpha_1}.
\]
Because $\alpha_0,\alpha_1<1/2$, the denominator is positive and
$0<\lambda<1$: the numerator is positive, while
\[
 (1-\alpha_0-\alpha_1)-\left(\frac12-\alpha_0\right)
 =\frac12-\alpha_1>0.
\]

Now define the phase $\beta$ of $W$ as follows. Put $\beta=0$ on $E_0$,
put $\beta=1$ on $E_1$, and on $(E_0\cup E_1)^c$ set $\beta=0$ with
probability $\lambda$, using fresh randomness independent of $(U,V)$.
Then
\[
 \Pp(\beta=0)
 =\alpha_0+\lambda(1-\alpha_0-\alpha_1)
 =\frac12.
\]
Hence $\beta$ is uniform and $W=w^{(\beta)}$ is a uniform-phase period-two
process. The construction does not alter the joint marginal law of
$(U,V)$, so $U$ and $V$ remain independent.

For $b\in\{0,1\}$, the event $E_b$ is contained in $\{\beta=b\}$ and,
on $E_b$, one has $U_{1:n}=V_{1:n}=W_{1:n}$. Since $\Pp(\beta=b)=1/2$,
\begin{align*}
 \Pp\!\left(
 U_{1:n}=V_{1:n}=W_{1:n}\mid \beta=b
 \right)
 &\ge \frac{\Pp(E_b)}{\Pp(\beta=b)}\\
 &=2\alpha_b\\
 &>
 2\left(\frac12-\frac{\delta}{2}\right)^2\\
 &=\frac12-\delta+\frac{\delta^2}{2}\\
 &\ge\frac12-\delta.
\end{align*}
This proves (iv). Since $W$ is determined by $\beta$, the equivalent
conditional statement given $W$ follows.
\end{proof}

\begin{remark}[Reverse sampling]\label{rem:reverse}
The coupling can be sampled in the reverse order without changing its
joint law. Indeed, for $b\in\{0,1\}$ let
\[
 \kappa_b=\mathcal L((U,V)\mid \beta=b).
\]
One may first sample $\beta\sim\operatorname{Bernoulli}(1/2)$, set
$W=w^{(\beta)}$, and then sample $(U,V)$ from the finite probability
kernel $\kappa_\beta$ using fresh randomness. The resulting triple has the
same law as in \cref{lem:coupling}. Notice that $U$ and $V$ are
independent \emph{unconditionally}; they need not be conditionally
independent given $W$. No stationarity is asserted for the coupled
triple $(U,V,W)$ itself; only the marginals $U$, $V$, and $W$ are
stationary. This is sufficient below because provisional laws use
$W_s$ alone, whereas completed stages use only the marginal pair
$(U_s,V_s)$. We use this reverse sampling below.
Because the phase space is finite, this reverse sampling may be realized
explicitly from one independent uniform seed: enumerate the possible phase
pairs and partition $[0,1]$ into intervals having the probabilities prescribed
by $\kappa_b$. Independent uniform seeds therefore realize all stages
simultaneously on one product probability space.
\end{remark}

\section{Ergodicity lemmas}

\begin{lemma}[Infinite coprime rotation]\label{lem:coprime}
Let $m_1,m_2,\ldots\ge2$ be pairwise coprime. On
\[
 G=\prod_{j\ge1}\Z_{m_j},
\]
with product-uniform measure, define
\[
 R(x_1,x_2,\ldots)=(x_1+1,x_2+1,\ldots).
\]
Then $R$ is ergodic.
\end{lemma}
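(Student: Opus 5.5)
We will prove ergodicity of $R$ by reducing to finite products and using Fourier analysis (characters) on the compact abelian group $G$, or equivalently the approximation of invariant sets by cylinder sets. The plan is to show that every $R$-invariant $f\in L^2(G)$ is almost surely constant.

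\emph{Step 1: finite factors.} For each $k$, let $G_k=\prod_{j\le k}\Z_{m_j}$ and $R_k$ the diagonal $+1$ map on $G_k$. Since the $m_j$ are pairwise coprime, the Chinese remainder theorem gives $G_k\cong \Z_{M_k}$ with $M_k=m_1\cdots m_k$, under which the element $(1,\dots,1)$ corresponds to a generator. Hence $R_k$ is a single cyclic permutation of the $M_k$ points of $G_k$, so it acts transitively and is ergodic for the uniform measure: any $R_k$-invariant function on $G_k$ is constant.

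\emph{Step 2: approximation.} Let $A\subseteq G$ be measurable with $\mu(A\triangle R^{-1}A)=0$, and set $f=\1_A$. Let $\cF_k$ be the $\sigma$-field generated by the first $k$ coordinates and $f_k=\E[f\mid\cF_k]$. Because $R$ maps $\cF_k$ to itself and preserves $\mu$, the conditional expectation commutes with $R$: $f_k\circ R=\E[f\circ R\mid\cF_k]=f_k$ almost surely. Now $f_k$ is a function on the finite set $G_k$, every point of which has positive mass, so this invariance holds pointwise and is $R_k$-invariance. By Step 1, $f_k$ is constant, equal to $\mu(A)$. By the martingale convergence theorem (or simply density of cylinder functions in $L^2$), $f_k\to f$ in $L^2$ since $\bigvee_k\cF_k$ is the full product $\sigma$-field. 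Thus $f=\mu(A)$ almost surely, forcing $\mu(A)\in\{0,1\}$.

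The only delicate point is the commutation $\E[f\circ R\mid\cF_k]=\E[f\mid\cF_k]\circ R$. I expect this to be the main thing to verify. It follows because $R$ is a measure-preserving bijection with $R^{-1}\cF_k=\cF_k$. An equivalent route would be to check directly that nontrivial characters $\chi(x)=\exp\bigl(2\pi i\sum_{j\le k} a_jx_j/m_j\bigr)$ satisfy $\chi\circ R=e^{2\pi i\theta}\chi$ with $\theta=\sum_j a_j/m_j\notin\Z$ unless all $a_j\equiv0$. Coprimality enters there too, through uniqueness of the CRT decomposition. Expanding an invariant $L^2$ function in characters then kills all nonconstant coefficients. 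Either route suffices, and I would present the conditional-expectation version.
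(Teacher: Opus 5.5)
Your proof is correct and follows essentially the same route as the paper: $f_k=\E[\1_A\mid\cF_k]$ is $R$-invariant because $R^{-1}\cF_k=\cF_k$, so it is constant by transitivity of the diagonal rotation on $G_k$ (Chinese remainder theorem), and martingale convergence then forces $\1_A=\mu(A)$ almost surely. You also make explicit that $(1,\ldots,1)$ generates $G_k$ and that every point of $G_k$ has positive mass, so almost-sure invariance is pointwise $R_k$-invariance; the paper leaves both points implicit.
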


\begin{proof}
Let $A$ be $R$-invariant and let $\cF_N$ be the $\sigma$-field generated
by the first $N$ coordinates. Put
\[
 f_N=\E[\1_A\mid\cF_N].
\]
Because $R^{-1}\cF_N=\cF_N$, conditional expectation is
equivariant under $R$. Hence
\[
 f_N\circ R
 =\E[\1_A\circ R\mid\cF_N]
 =\E[\1_A\mid\cF_N]
 =f_N
 \qquad\text{almost surely}.
\]
On the first $N$ coordinates,
$R$ is addition by $(1,\ldots,1)$ on
\[
 G_N=\prod_{j=1}^N\Z_{m_j}.
\]
The Chinese remainder theorem implies that this permutation has order
$\prod_{j=1}^Nm_j=|G_N|$, hence is transitive. Thus every invariant
$\cF_N$-measurable function is constant, and $f_N=\Pp(A)$ almost
surely.

The $\sigma$-fields $\cF_N$ increase to the full product $\sigma$-field.
By martingale convergence, $f_N\to\1_A$ almost surely. Hence
$\1_A=\Pp(A)$ almost surely, so $\Pp(A)\in\{0,1\}$.
\end{proof}

For a probability-preserving transformation $S$ of a probability
space $(\Xi,\mathcal G,\mu)$, write $U_Sg=g\circ S$ for its Koopman
operator on $L^2(\mu)$. We call $S$ \emph{mixing} if
\[
 \mu(A\cap S^{-n}B)\longrightarrow\mu(A)\mu(B)
\]
for all measurable $A,B$. By linearity and $L^2$ approximation of
functions by bounded simple functions, this is equivalent to
\[
 \langle U_S^n g_1,g_2\rangle
 \longrightarrow
 \left(\int g_1\right)\overline{\left(\int g_2\right)}
 \qquad(g_1,g_2\in L^2).
 \label{eq:l2mixing}
\]

\begin{lemma}[Product with a mixing system]\label{lem:product}
If $R$ is ergodic and $S$ is mixing, then $R\times S$ is ergodic.
\end{lemma}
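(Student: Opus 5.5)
The plan is the standard spectral argument. Let $R$ act on $(\Xi_1,\mu_1)$ and $S$ on $(\Xi_2,\mu_2)$. Take an $(R\times S)$-invariant function $F\in L^2(\mu_1\otimes\mu_2)$. We will show that $F$ is a.e.\ constant. Ergodicity is then the special case $F=\1_A$ for an invariant set $A$.

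The reduction goes through the von Neumann mean ergodic theorem. Consider tensor products $g\otimes h$ with $g\in L^2(\mu_1)$ and $h\in L^2(\mu_2)$. Their linear span is dense in $L^2(\mu_1\otimes\mu_2)$. Therefore it suffices to show that the Cesàro averages
\[
A_N(g\otimes h)=\frac1N\sum_{n=0}^{N-1}U_R^ng\otimes U_S^nh
\]
converge in $L^2$ to the constant $\bigl(\int g\bigr)\bigl(\int h\bigr)$. Indeed, the averages $A_N$ are contractions and converge strongly to the orthogonal projection $\Pi$ onto the invariant functions. Once $\Pi$ agrees with the projection onto constants on a dense set, it agrees with it everywhere. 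Then every invariant $F$ satisfies $F=\Pi F=\int F$.

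To prove the convergence, split $h=\int h+h_0$ with $\int h_0=0$. The constant part contributes
\[
\Bigl(\int h\Bigr)\frac1N\sum_{n<N}U_R^ng.
\]
By ergodicity of $R$ and the mean ergodic theorem, this converges to $\bigl(\int g\bigr)\bigl(\int h\bigr)$. It remains to show that
\[
B_N=\frac1N\sum_{n<N}U_R^ng\otimes U_S^nh_0\to0\quad\text{in } L^2.
\]
Expand the squared norm:
\[
\|B_N\|^2=\frac1{N^2}\sum_{m,n<N}\langle U_R^mg,U_R^ng\rangle\,\langle U_S^mh_0,U_S^nh_0\rangle.
\]
The first factor is bounded by $\|g\|^2$. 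The second factor equals $\langle U_S^{m-n}h_0,h_0\rangle$ (or its conjugate), using measure preservation; for noninvertible $S$ this holds with $m\ge n$ after pulling back. By the mixing property \eqref{eq:l2mixing} applied with $\int h_0=0$, the second factor tends to $0$ as $|m-n|\to\infty$. The double Cesàro average of a sequence that vanishes as $|m-n|\to\infty$ tends to zero, so $\|B_N\|\to0$.

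The main care point is this bound on the off-diagonal correlations. In particular, $U_R$ need not be unitary if $R$ is not invertible. This is handled by $\langle U_R^mg,U_R^ng\rangle=\langle U_R^{m-n}g,g\rangle$ for $m\ge n$, which gives the bound $\|g\|^2$ regardless. The factor maps actually used in the paper are invertible anyway. The density of the tensor span is standard.
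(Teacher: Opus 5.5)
Your proof is correct, but it reaches the result through norm convergence rather than the paper's weak-convergence argument. The paper characterizes ergodicity of $Q$ by $\frac1N\sum_{n<N}\langle U_Q^nf,h\rangle\to(\int f)\overline{(\int h)}$ for all $f,h\in L^2$. It tests this on pairs of tensors $f=f_1\otimes g_1$, $h=f_2\otimes g_2$, using the factorization $\langle U_R^nf_1,f_2\rangle\langle U_S^ng_1,g_2\rangle$. Mixing turns the second factor into its limit plus $o(1)$. The first factor is bounded, so the $o(1)$ part has vanishing Ces\`aro mean, and ergodicity of $R$ handles what remains. The extension to all of $L^2$ then uses the uniform bound $|A_N(f,h)|\le\|f\|_2\|h\|_2$ on the bilinear forms. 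You instead show that the Ces\`aro averages of a single tensor converge in $L^2$. That requires splitting $h=\int h+h_0$ and a second-order estimate: with $\gamma(k)=|\langle U_S^kh_0,h_0\rangle|$, one has $\|B_N\|_2^2\le\|g\|_2^2\bigl(\frac1N\gamma(0)+\frac2N\sum_{1\le k<N}\gamma(k)\bigr)\to0$. The paper's route is shorter: no decomposition and no double sum, just a bounded sequence times a null sequence. Yours yields the stronger statement that the averages converge in norm to the constant. Your density step is also the cleaner operator identity that the projection onto invariant functions coincides with the projection onto constants. Both arguments use mixing only through Ces\`aro averages of the correlation defects, so either goes through for weakly mixing $S$ with only cosmetic changes. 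Your concern about non-invertibility is unnecessary: Koopman operators of measure-preserving maps are isometries, so Cauchy--Schwarz already gives $|\langle U_R^mg,U_R^ng\rangle|\le\|g\|_2^2$.
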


\begin{proof}
We use the following immediate consequence of the von Neumann mean
ergodic theorem. If $Q$ is probability preserving, then
$N^{-1}\sum_{n=0}^{N-1}U_Q^n f$ converges in $L^2$ to the orthogonal
projection of $f$ onto the invariant functions. Hence $Q$ is ergodic
if and only if, for every $f,h\in L^2$,
\[
 \frac1N\sum_{n=0}^{N-1}\langle U_Q^nf,h\rangle
 \longrightarrow
 \left(\int f\right)\overline{\left(\int h\right)}. \label{eq:mean}
\]
Indeed, under ergodicity the invariant functions are exactly the
constants; conversely, applying the displayed limit to the indicator
of an invariant event shows that its probability equals its square.

First take $f=f_1\otimes g_1$ and
$h=f_2\otimes g_2$. Then
\[
 \langle U_{R\times S}^nf,h\rangle
 =\langle U_R^nf_1,f_2\rangle\,
  \langle U_S^ng_1,g_2\rangle.
\]
Mixing gives
\[
 \langle U_S^ng_1,g_2\rangle
 =
 \left(\int g_1\right)\overline{\left(\int g_2\right)}+o(1).
\]
The first factor is bounded by $\|f_1\|_2\|f_2\|_2$, so the Ces\`aro
mean of its product with the $o(1)$ term vanishes. Ergodicity of $R$
and \eqref{eq:mean} therefore give the desired limit for simple
tensors, and hence by linearity for finite sums of simple tensors.

For completeness, the density step is uniform in $N$. If
\[
 A_N(f,h)=\frac1N\sum_{n=0}^{N-1}
 \langle U_{R\times S}^nf,h\rangle,
\]
then
\[
 |A_N(f,h)|\le\|f\|_2\|h\|_2
\]
for every $N$. Finite sums of simple tensors are dense in the product
$L^2$ space, and the limiting bilinear form
$(f,h)\mapsto(\int f)\overline{(\int h)}$ satisfies the same type of
bound. Approximation therefore extends \eqref{eq:mean} to arbitrary
$f,h\in L^2$.
\end{proof}

\begin{lemma}[I.i.d.\ shifts and factors]\label{lem:iidfactor}
A two-sided i.i.d.\ shift is mixing. Moreover, every measurable
shift-commuting factor of a stationary ergodic system is stationary
and ergodic.
\end{lemma}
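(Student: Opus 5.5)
The lemma has two parts, and I will prove them separately.

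\emph{I.i.d.\ shift is mixing.} Let $\mu=\nu^{\otimes\Z}$ on $E^{\Z}$ with the left shift $S$. First take cylinder events $A,B$, meaning events depending only on coordinates in a finite window $[-k,k]$. For $n>2k$, the event $S^{-n}B$ depends only on coordinates in $[n-k,n+k]$, which is disjoint from $[-k,k]$. By independence of the coordinates,
\[
 \mu(A\cap S^{-n}B)=\mu(A)\mu(S^{-n}B)=\mu(A)\mu(B)
\]
exactly, for all large $n$.

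For general measurable $A,B$, fix $\varepsilon>0$. The cylinder events form an algebra generating the product $\sigma$-field, so I can choose cylinder events $A',B'$ with $\mu(A\triangle A')<\varepsilon$ and $\mu(B\triangle B')<\varepsilon$. Shift invariance gives $\mu(S^{-n}B\triangle S^{-n}B')<\varepsilon$ for every $n$. Hence $|\mu(A\cap S^{-n}B)-\mu(A'\cap S^{-n}B')|<2\varepsilon$. Also $|\mu(A)\mu(B)-\mu(A')\mu(B')|<2\varepsilon$. Combining these with the exact identity for $A',B'$ gives
\[
 \limsup_n|\mu(A\cap S^{-n}B)-\mu(A)\mu(B)|\le4\varepsilon.
\]
Since $\varepsilon$ is arbitrary, $S$ is mixing. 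The only substantive point here is the approximation step, and it is standard.

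\emph{Factors.} Let $(\Xi,\mathcal G,\mu,S)$ be stationary ergodic, and let $\pi:\Xi\to\Omega'$ be measurable with $\pi\circ S=T'\circ\pi$, where $T'$ is the shift on the target. Put $\nu=\mu\circ\pi^{-1}$.

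Stationarity follows from $\nu\circ T'^{-1}=\mu\circ(T'\circ\pi)^{-1}=\mu\circ(\pi\circ S)^{-1}=\mu\circ S^{-1}\circ\pi^{-1}=\nu$.

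For ergodicity, let $B$ satisfy $\nu(B\triangle T'^{-1}B)=0$, and set $A=\pi^{-1}B$. Then
\[
 S^{-1}A=(\pi\circ S)^{-1}B=\pi^{-1}T'^{-1}B,
\]
so $\mu(A\triangle S^{-1}A)=\nu(B\triangle T'^{-1}B)=0$. Ergodicity of $S$, in the almost-invariant formulation used in the paper's definition, gives $\mu(A)\in\{0,1\}$. Since $\nu(B)=\mu(A)$, this gives $\nu(B)\in\{0,1\}$.

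If the factor map commutes with the shift only almost surely, I first modify $\pi$ on an $S$-invariant null set. Concretely, I restrict to $\bigcap_{k\in\Z}S^{-k}\{\pi\circ S=T'\circ\pi\}$, which has full measure. The computation above is then unchanged.
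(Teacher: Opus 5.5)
Your proof is correct and follows the paper's argument essentially step for step: exact factorization for cylinder events once the shifted coordinate window is disjoint, the same $4\varepsilon$ approximation bound for general events, and pulling an almost-invariant set back through the factor map to invoke ergodicity of the source system. Your explicit computation that the pushforward is stationary and your remark on factor maps that commute only almost surely are minor additions: the paper asserts the first without computation and does not need the second.
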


\begin{proof}
For the first assertion, let $A$ and $B$ be cylinder events. For all
sufficiently large $n$, the coordinate sets on which $A$ and
$S^{-n}B$ depend are disjoint, so independence of the coordinates gives
\[
 \Pp(A\cap S^{-n}B)=\Pp(A)\Pp(B).
\]
For arbitrary measurable $A,B$ and $\delta>0$, choose cylinder events
$A',B'$ with
$\Pp(A\triangle A')<\delta$ and
$\Pp(B\triangle B')<\delta$. For all sufficiently large $n$,
$\Pp(A'\cap S^{-n}B')=\Pp(A')\Pp(B')$. The symmetric-difference
bounds then give
\[
 \limsup_{n\to\infty}
 \left|\Pp(A\cap S^{-n}B)-\Pp(A)\Pp(B)\right|
 \le 4\delta.
\]
Letting $\delta\downarrow0$ proves mixing.

For the factor assertion, let $\pi$ be a measurable map satisfying
$\pi\circ T=S\circ\pi$, where $T$ preserves an ergodic probability
measure $\mu$. The pushforward $\pi_\#\mu$ is $S$-invariant. If $A$ is
$S$-invariant modulo $\pi_\#\mu$-null sets, then
\[
 (\pi_\#\mu)(A\mathbin{\triangle}S^{-1}A)=0.
\]
Using $\pi\circ T=S\circ\pi$, this implies
\[
 \mu\!\left(\pi^{-1}(A)\mathbin{\triangle}
 T^{-1}\pi^{-1}(A)\right)=0.
\]
Ergodicity of $T$ therefore gives
\[
 (\pi_\#\mu)(A)=\mu(\pi^{-1}(A))\in\{0,1\}.
\]
Thus the factor is ergodic.
\end{proof}

\section{Rare markers}

Let $(K_t^X)_{t\in\Z}$ and $(K_t^Y)_{t\in\Z}$ be mutually independent
i.i.d.\ processes, independent of all environments introduced below,
with
\[
 \Pp\{K_t^X=k\}=\Pp\{K_t^Y=k\}=2^{-k},\qquad k\in\N. \label{eq:markers}
\]
Thus
\[
 \Pp\{K_t^X>b\}=2^{-b},\qquad
 \Pp\{K_t^X\ge a\}=2^{1-a}. \label{eq:tails}
\]
Given binary environment processes $(A^{(k)})_{k\ge1}$ and
$(B^{(k)})_{k\ge1}$, define
\[
 X_t=A_t^{(K_t^X)},\qquad
 Y_t=B_t^{(K_t^Y)}. \label{eq:observe}
\]

\section{Proof of the main theorem}

\subsection{Recursive dependent approximations}

We now prove \cref{thm:main}. Fix an arbitrary test $(\phi_n)$ satisfying
\eqref{eq:alt} for every law in $\cE\setminus\cI$. We construct the
null law asserted there. Let
\[
 \varepsilon_s=2^{-s-4},\qquad s\ge1.
\]
Fix once and for all a product probability space carrying the two marker
processes from \eqref{eq:markers}, an independent sequence
$(\beta_s)_{s\ge1}$ of Bernoulli$(1/2)$ variables, and an independent
sequence $(\xi_s)_{s\ge1}$ of uniform $[0,1]$ variables. All these
objects are mutually independent. Set
\[
 W_s=w^{(\beta_s)},\qquad s\ge1.
\]
After $n_s,p_s,q_s$ have been chosen, the reverse kernel from
\cref{rem:reverse} is a kernel from $\{0,1\}$ to the finite phase set
$\Z_{p_s}\times\Z_{q_s}$; we realize its phase pair as a measurable function of
$(\beta_s,\xi_s)$ by the finite inverse-transform construction in
\cref{rem:reverse}. Thus all stage
variables are defined on this one probability space, and different stage
triples are independent because they use disjoint independent pairs
$(\beta_s,\xi_s)$.

All numerical choices in the recursion below are deterministic; only
the already specified coordinates of the product probability space are
random. We recursively construct integers
\[
 0=b_0<b_1<b_2<\cdots,\qquad
 a_s=b_{s-1}+1,\qquad
 I_s=\{a_s,a_s+1,\ldots,b_s\},
\]
sample sizes $0=n_0<n_1<n_2<\cdots$, and processes $U_s,V_s,W_s$.
The periods $p_s$ of $U_s$ and $q_s$ of $V_s$ are odd primes, and
all primes in the list
$p_1,q_1,p_2,q_2,\ldots$ are distinct; every $W_s$ has period $2$.

Suppose stages $1,\ldots,s-1$ have been constructed. Use the
preassigned uniform-phase process $W_s$, which is independent of all
earlier stages and of both marker sequences, and define provisional
environments by
\[
 A^{(k,s)}=
 \begin{cases}
 U_r,&k\in I_r,\ r<s,\\
 W_s,&k\ge a_s,
 \end{cases}
 \qquad
 B^{(k,s)}=
 \begin{cases}
 V_r,&k\in I_r,\ r<s,\\
 W_s,&k\ge a_s.
 \end{cases} \label{eq:provisional}
\]
Together with the markers and \eqref{eq:observe}, these environments
define a pair process; denote its law by $Q_s$.

\begin{lemma}\label{lem:Qs}
For every $s$, $Q_s\in\cE\setminus\cI$.
\end{lemma}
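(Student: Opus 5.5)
We must show two things about $Q_s$: it is stationary and ergodic, and its coordinate processes are dependent.

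\textbf{Stationarity and ergodicity.} We realize $Q_s$ as a shift-commuting factor of a single ergodic system and then invoke \cref{lem:iidfactor}. The environments entering $Q_s$ are the periodic processes $U_1,V_1,\ldots,U_{s-1},V_{s-1}$ and $W_s$, with periods $p_1,q_1,\ldots,p_{s-1},q_{s-1}$ and $2$. These periods are pairwise coprime: the primes are distinct and odd, and $2$ is prime to all of them.

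The environments are not independent as triples. Within a stage, $(U_r,V_r,W_r)$ is coupled, but $Q_s$ uses only the marginal pair $(U_r,V_r)$ for $r<s$, and $W_s$ is an independent fresh variable. By \cref{lem:coupling}(ii), $U_r$ and $V_r$ are independent. Different stages are independent, and $W_s$ is independent of all earlier stages. Hence the joint phase vector
\[
(\Theta_{p_1},\Theta_{q_1},\ldots,\Theta_{p_{s-1}},\Theta_{q_{s-1}},\beta_s)
\]
is uniform on $\Z_{p_1}\times\Z_{q_1}\times\cdots\times\Z_{q_{s-1}}\times\Z_2$, and the time shift acts on it as addition of $(1,\ldots,1)$. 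This rotation is ergodic by \cref{lem:coprime}; here the product is finite, or equivalently we pad with trivial factors, and the Chinese remainder step applies directly.

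The markers $(K^X_t,K^Y_t)_t$ form a two-sided i.i.d.\ shift, which is mixing by \cref{lem:iidfactor}. By \cref{lem:product}, the product of the rotation with the marker shift is ergodic. The map sending (phases, markers) to $(X_t,Y_t)_t$ through \eqref{eq:observe} is measurable and commutes with the shifts. Each $A^{(k,s)}_t$ is a fixed function of the phase, evaluated at time $t$, and shifting time advances every phase by one. By \cref{lem:iidfactor}, $Q_s$ is therefore stationary and ergodic.

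\textbf{Dependence.} It suffices to exhibit one pair of events, one in $\sigma(X)$ and one in $\sigma(Y)$, that are dependent. I would use $\Cov(X_1,X_2\oplus\cdots)$, or more simply the event that both coordinates reveal the common phase $\beta_s$. Set
\[
G=\{K^X_1\ge a_s,\;K^Y_1\ge a_s\}.
\]
On $G$ we have $X_1=Y_1=W_{s,1}=\beta_s$. Off $G$, at least one coordinate reads from an earlier-stage environment.

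The plan is to compute $\Pp(X_1=Y_1)$ and compare it with its value under independence of the coordinate processes. Because $\beta_s$ is uniform and independent of everything else, conditioning on $\beta_s$ gives
\[
\Pp(X_1=1)=\Pp(Y_1=1)=\tfrac12\ \text{on the } W_s \text{ part}.
\]
The clean way is to test a functional that depends only on $\beta_s$. Take $f(X)=(-1)^{X_1+X_2}$. When both markers at times $1$ and $2$ are at least $a_s$, we get $X_1=\beta_s$ and $X_2=1-\beta_s$, so $f=-1$. That is not what is needed; the right signed functional is $g(X)=(-1)^{X_1}$, and we need $\Cov\bigl(g(X),g(Y)\bigr)\neq0$.

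Write $g(X)=(-1)^{\beta_s}\1_{\{K^X_1\ge a_s\}}+Z_X\1_{\{K^X_1<a_s\}}$, and $g(Y)$ analogously, where $Z_X$ and $Z_Y$ are functions of the earlier environments and markers. Each of $Z_X,Z_Y$ is independent of $\beta_s$. Moreover $Z_X$ and $Z_Y$ are independent of each other, since $X$ draws only from the $U_r$ together with $K^X$, and $Y$ draws only from the $V_r$ together with $K^Y$.

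Expanding the covariance, the cross terms involving exactly one factor $(-1)^{\beta_s}$ have zero mean, and the $Z_XZ_Y$ term factorizes. What remains is
\[
\Cov\bigl(g(X),g(Y)\bigr)=\Pp(K^X_1\ge a_s)\,\Pp(K^Y_1\ge a_s)=2^{2(1-a_s)}>0.
\]
This contradicts independence, so $Q_s\notin\cI$.

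\textbf{Main obstacle.} The delicate step is bookkeeping the independence structure in the ergodicity argument. One must use the reverse-sampling realization of \cref{rem:reverse} to confirm two facts: the earlier $W_r$ do not appear in $Q_s$, and the marginal phases of the pairs $(U_r,V_r)$ are uniform and jointly independent across all coordinates. Without both facts the rotation would not be the uniform coprime rotation to which \cref{lem:coprime} applies.
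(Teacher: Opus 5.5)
Your proof is correct and follows the paper's route. Ergodicity comes from the CRT-transitive finite phase rotation times the mixing marker shift, via \cref{lem:product} and the factor part of \cref{lem:iidfactor}; for the rotation, cite CRT transitivity directly, since padding \cref{lem:coprime} with trivial factors is excluded by its hypothesis $m_j\ge2$. Dependence comes from a positive covariance at a single time: because $(-1)^{X_1}=1-2X_1$, your expansion gives $\Cov_{Q_s}(X_1,Y_1)=2^{2(1-a_s)}/4=\eta_s^2/4$, the same value the paper obtains from the law of total covariance conditioned on $(K_1^X,K_1^Y)$, so just delete the abandoned attempts involving $X_1,X_2$.
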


\begin{proof}
The periodic latent variables actually used in
\eqref{eq:provisional} have periods
\[
 p_1,q_1,\ldots,p_{s-1},q_{s-1},2.
\]
Although each completed pair $(U_r,V_r)$ was constructed through an
auxiliary $W_r$, \cref{lem:coupling} leaves its joint marginal equal
to the product of the uniform phase laws on
$\Z_{p_r}\times\Z_{q_r}$. The auxiliary variables $W_r$, $r<s$, do
not appear in \eqref{eq:provisional}. Different completed stages are
independent. Hence the phase variables that do appear in the
provisional construction are jointly product-uniform.

The displayed periods are pairwise coprime: $2$ is coprime to every
odd prime, and all the odd primes are distinct. Under one time shift,
each phase is increased by one in its cyclic group, so the simultaneous
phase shift is a transitive rotation on the finite phase product by the
Chinese remainder theorem, hence ergodic. The pair of marker sequences
is an i.i.d.\ process and therefore a mixing shift. By
\cref{lem:product}, the product latent system is ergodic. The
observation map \eqref{eq:observe} is measurable (for each $t$, exactly
one marker level is selected) and commutes with the shift. By the factor
assertion in \cref{lem:iidfactor}, $Q_s$ is stationary and ergodic.

To prove dependence, fix $t$ and condition on
$M_t=(K_t^X,K_t^Y)$. Put
\[
 \eta_s=\Pp\{K_t^X\ge a_s\}=2^{1-a_s}>0.
\]
If $M_t=(k,\ell)$ and at least one of $k,\ell$ is below $a_s$, the
two selected environments are independent: old left-right
environments are independent, different completed stages are
independent, and $W_s$ is independent of all completed stages. If
$k,\ell\ge a_s$, then
\[
 X_t=Y_t=W_{s,t},
\]
and $W_{s,t}$ is Bernoulli$(1/2)$. Consequently,
\[
 \Cov(X_t,Y_t\mid M_t)
 =\frac14\1_{\{K_t^X\ge a_s,\ K_t^Y\ge a_s\}}.
 \label{eq:condcov}
\]
Furthermore, $\E[X_t\mid M_t]$ depends only on $K_t^X$, while
$\E[Y_t\mid M_t]$ depends only on $K_t^Y$. Since $K_t^X$ and
$K_t^Y$ are independent,
\[
 \Cov\!\left(\E[X_t\mid M_t],\E[Y_t\mid M_t]\right)=0.
\]
Thus the law of total covariance gives
\begin{align*}
 \Cov_{Q_s}(X_t,Y_t)
 &=
 \E\!\left[\Cov(X_t,Y_t\mid M_t)\right]
 +\Cov\!\left(\E[X_t\mid M_t],\E[Y_t\mid M_t]\right)\\
 &=\frac14
   \Pp\{K_t^X\ge a_s,\ K_t^Y\ge a_s\}\\
 &=\frac{\eta_s^2}{4}>0.
\end{align*}
Since independent coordinate processes would in particular make
$X_t$ and $Y_t$ independent for this fixed $t$, and independent
Bernoulli-valued variables have zero covariance, the strictly positive
covariance above proves that the two coordinate processes are not
independent. Hence $Q_s\in\cE\setminus\cI$.
\end{proof}

By \eqref{eq:alt}, choose a deterministic $n_s>n_{s-1}$ such that
\[
 Q_s\{\phi_{n_s}=1\}>1-\varepsilon_s. \label{eq:depdecision}
\]
Choose an integer $b_s\ge a_s$ so large that
\[
 2n_s2^{-b_s}<\varepsilon_s. \label{eq:bs}
\]
By the infinitude of primes, choose distinct primes $p_s<q_s$ such that
\[
 p_s,q_s>\max\left\{2,\frac{2n_s-1}{\varepsilon_s},
 p_1,q_1,\ldots,p_{s-1},q_{s-1}\right\},
\]
where for $s=1$ the previous-prime entries are omitted. Thus $p_s,q_s$
are new odd primes.
Apply \cref{lem:coupling,rem:reverse} with
$n=n_s$, $\delta=\varepsilon_s$, and these fixed primes. Using the preassigned seed $\xi_s$ and the finite-kernel realization in
\cref{rem:reverse}, define $U_s,V_s$ without changing
$W_s$ so that
\begin{enumerate}[label=(\alph*)]
\item $U_s$ has law $C^{(p_s)}$, $V_s$ has law $C^{(q_s)}$, and
$U_s$ and $V_s$ are independent of each other;
\item the triple $(U_s,V_s,W_s)$ is independent of the $\sigma$-field
generated by all earlier stages and by both marker sequences (because it
is a function only of the fresh pair $(\beta_s,\xi_s)$);
\item
\[
 \Pp(U_{s,1:n_s}=V_{s,1:n_s}=W_{s,1:n_s}\mid W_s)
 \ge\frac12-\varepsilon_s
 \qquad\text{almost surely}.
\]
\end{enumerate}

The order is essential: the provisional law $Q_s$ and the sample size
$n_s$ are fixed before $U_s,V_s$ are defined, so $n_s$ cannot depend on
the stage-$s$ pair that later replaces $W_s$. The construction is
simultaneous on the fixed product space: the independent seeds $\xi_s$ and
the finite-kernel realization in \cref{rem:reverse} define every stage once
its deterministic parameters have been chosen.

\subsection{The final independent law}

Because $a_s=b_{s-1}+1$ and $b_s\ge a_s$, the sets
$I_s=\{a_s,\ldots,b_s\}$ are disjoint and consecutive. Moreover
$b_s\ge b_{s-1}+1$, hence $b_s\ge s$ and $b_s\to\infty$; therefore
the intervals cover $\N$. Define final environments by
\[
 A^{(k)}=U_s,\qquad B^{(k)}=V_s,\qquad k\in I_s, \label{eq:finalenv}
\]
and define
\[
 X_t^\star=A_t^{(K_t^X)},\qquad
 Y_t^\star=B_t^{(K_t^Y)}. \label{eq:finalproc}
\]
Let $P^\star$ be the law of
$(X_t^\star,Y_t^\star)_{t\in\Z}$.

\begin{lemma}\label{lem:Pstar}
The law $P^\star$ belongs to $\cI$.
\end{lemma}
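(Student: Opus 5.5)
The proof has two parts. First, $P^\star$ is stationary and ergodic. Second, the coordinate processes are independent.

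\textbf{Stationarity and ergodicity.} We argue as in \cref{lem:Qs}, now with infinitely many phases. The latent phase variables appearing in \eqref{eq:finalenv} are the phases $\theta_s\in\Z_{p_s}$ of $U_s$ and $\vartheta_s\in\Z_{q_s}$ of $V_s$ for $s\ge1$; the auxiliary $W_s$ do not enter. By property (a), each pair $(\theta_s,\vartheta_s)$ is product-uniform on $\Z_{p_s}\times\Z_{q_s}$. By property (b), different stages are independent and are independent of both marker sequences. Hence the full phase vector is product-uniform on
\[
 G=\prod_{s\ge1}\Z_{p_s}\times\Z_{q_s}.
\]
A time shift adds one to every coordinate. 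Since the moduli $p_1,q_1,p_2,q_2,\ldots$ are distinct primes, they are pairwise coprime, and \cref{lem:coprime} shows that this rotation $R$ is ergodic. The pair of marker sequences is an i.i.d.\ shift, hence mixing by \cref{lem:iidfactor}, so $R\times S$ is ergodic by \cref{lem:product}.

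The observation map \eqref{eq:finalproc} is measurable. For each $t$, the output $X^\star_t$ is the level-$K^X_t$ entry, and this entry is a measurable function of the phase of the unique stage $s$ with $K^X_t\in I_s$; the intervals $I_s$ partition $\N$. The same holds for $Y^\star_t$. The map commutes with the shift, so the factor part of \cref{lem:iidfactor} gives $P^\star\in\cE$.

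\textbf{Independence.} Write
\[
 \mathcal U=\sigma\bigl((\theta_s)_{s\ge1}, K^X\bigr),\qquad
 \mathcal V=\sigma\bigl((\vartheta_s)_{s\ge1}, K^Y\bigr).
\]
The process $X^\star$ is $\mathcal U$-measurable and $Y^\star$ is $\mathcal V$-measurable, so it suffices to show that $\mathcal U$ and $\mathcal V$ are independent. The underlying family
\[
 \bigl\{(\theta_s,\vartheta_s)\bigr\}_{s\ge1},\quad K^X,\quad K^Y
\]
is mutually independent, and within each stage $\theta_s$ is independent of $\vartheta_s$. Grouping independent blocks therefore shows that the whole collection $\{\theta_s,\vartheta_s\}_s\cup\{K^X,K^Y\}$ is mutually independent. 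Splitting it into the $\theta$- and $K^X$-part versus the $\vartheta$- and $K^Y$-part gives independence of $\mathcal U$ and $\mathcal V$.

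Concretely, for finite sets of stages, $\pi$-systems of cylinder events factor, and the $\pi$--$\lambda$ theorem extends this to the generated $\sigma$-fields. Hence $P^\star\in\cI$.

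\textbf{Main obstacle.} The delicate point is that $(U_s,V_s)$ was built through a coupling with $W_s$, so $U_s$ and $V_s$ are not conditionally independent given $W_s$. We must therefore make sure the argument never conditions on the $W_s$ and uses only the joint marginal of the pair. Property (b) is what guarantees this: $(U_s,V_s)$ is a function of the fresh seed $(\beta_s,\xi_s)$ alone, so the unconditional independence from (a) is exactly what the argument uses.

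A second point to check is measurability of the infinite-level selection. This reduces to a countable union over $k$ of the events $\{K^X_t=k\}$, each intersected with a measurable function of a single stage's phase.
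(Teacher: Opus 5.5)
Your proof is correct and follows the paper's argument. For ergodicity, you use product-uniform phases on $\prod_s(\Z_{p_s}\times\Z_{q_s})$ together with \cref{lem:coprime}, \cref{lem:product} and the factor part of \cref{lem:iidfactor}; for independence, you use mutual independence of the stage pairs and markers plus a $\pi$--$\lambda$ extension, and you rightly note that only the unconditional product marginal of $(U_s,V_s)$ is used, never conditioning on $W_s$. The remaining differences are cosmetic: you prove ergodicity first, and you get independence by splitting one mutually independent collection rather than by the paper's factorization on finite cylinders followed by adjoining the markers.
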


\begin{proof}
For each $s$, \cref{lem:coupling} preserves the product marginal law
of $(U_s,V_s)$, and the stage triples are independent across $s$.
We spell out the countable independence consequence. For every finite
$J\subset\N$ and measurable sets $A_s$ and $B_s$ in the respective finite
supports of the path-valued variables $U_s$ and $V_s$,
\begin{align*}
 &\Pp\!\left(
   \bigcap_{s\in J}\{U_s\in A_s\}
   \cap
   \bigcap_{s\in J}\{V_s\in B_s\}
 \right)\\
 &\qquad=
 \prod_{s\in J}\Pp\{U_s\in A_s,V_s\in B_s\}\\
 &\qquad=
 \left(\prod_{s\in J}\Pp\{U_s\in A_s\}\right)
 \left(\prod_{s\in J}\Pp\{V_s\in B_s\}\right).
\end{align*}
Thus the two families factorize on their finite-coordinate cylinder
$\pi$-systems. Applying the \(\pi\)-\(\lambda\) theorem first in one argument and
then in the other gives independence of
$\sigma(U_s:s\ge1)$ and $\sigma(V_s:s\ge1)$.

The marker processes are independent of every stage input and of each
other by the underlying product-space construction. Repeating the same
finite-cylinder argument after adjoining the marker coordinates gives
independence of
\[
 \mathcal A_X
 :=\sigma(U_s:s\ge1;\ K_t^X:t\in\Z)
 \quad\text{and}\quad
 \mathcal A_Y
 :=\sigma(V_s:s\ge1;\ K_t^Y:t\in\Z).
\]
For each $t$, the variable $X_t^\star$ is $\mathcal A_X$-measurable
and $Y_t^\star$ is $\mathcal A_Y$-measurable. Hence
$\sigma(X_t^\star:t\in\Z)$ and
$\sigma(Y_t^\star:t\in\Z)$ are independent.

It remains to prove that their joint law is stationary and ergodic.
The auxiliary variables $W_s$ are no longer needed and may be
discarded. Since only the resulting law matters, we may equivalently
realize the remaining environments by their independent uniform phases.
Their joint phase law is therefore product-uniform on
\[
 G=\prod_{s\ge1}(\Z_{p_s}\times\Z_{q_s}).
\]
All moduli $p_1,q_1,p_2,q_2,\ldots$ are pairwise coprime. Therefore
simultaneous addition by one on $G$ is ergodic by
\cref{lem:coprime}. The pair process
$((K_t^X,K_t^Y))_{t\in\Z}$ is i.i.d., hence its shift is mixing. By
\cref{lem:product}, the product of the phase rotation and the marker
shift is stationary and ergodic. Under this latent transformation, one
time step adds one to every periodic phase and shifts both marker
sequences by one. Consequently the observation map
\eqref{eq:finalproc} is measurable and shift commuting. By
\cref{lem:iidfactor}, its image law $P^\star$ is stationary and
ergodic. Thus $P^\star\in\cI$.
\end{proof}

\subsection{The diagonal contradiction}

Fix $s$ and realize the provisional $Q_s$-process and the final
$P^\star$-process on the common probability space of the construction.
Write
\[
 Z^{(s)}=((X_t^{(s)},Y_t^{(s)}))_{t\in\Z}
\]
for the provisional process.

Let
\[
 G_s=
 \{K_t^X\le b_s,\ K_t^Y\le b_s
   \text{ for every }1\le t\le n_s\}.
\]
By \eqref{eq:tails}, the union bound, and \eqref{eq:bs},
\[
 \Pp(G_s^c)\le2n_s2^{-b_s}<\varepsilon_s. \label{eq:G}
\]
Let
\[
 D_s=\{U_{s,1:n_s}=V_{s,1:n_s}=W_{s,1:n_s}\}
\]
and
\[
 F_s=\{\phi_{n_s}(Z_{1:n_s}^{(s)})=1\}.
\]
Since $Z^{(s)}$ has law $Q_s$, \eqref{eq:depdecision} gives
\[
 \Pp(F_s)>1-\varepsilon_s.
\]
Hence
\[
 \Pp(F_s\cap G_s)>1-2\varepsilon_s. \label{eq:FG}
\]

\begin{lemma}[Finite-sample identity]\label{lem:identity}
On $G_s\cap D_s$,
\[
 Z_{1:n_s}^{(s)}=(X^\star,Y^\star)_{1:n_s}.
\]
\end{lemma}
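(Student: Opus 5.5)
The plan is a coordinatewise comparison for each $t\in\{1,\ldots,n_s\}$, splitting on which interval $I_r$ contains the selected marker levels. Both processes are built on the same probability space, with the same marker sequences $(K_t^X)$, $(K_t^Y)$ and the same stage variables. Hence $X_t^{(s)}=A_t^{(K_t^X,s)}$ and $X_t^\star=A_t^{(K_t^X)}$, and likewise $Y_t^{(s)}=B_t^{(K_t^Y,s)}$ and $Y_t^\star=B_t^{(K_t^Y)}$. It therefore suffices to show that, on $G_s\cap D_s$ and for every $1\le t\le n_s$,
\[
 A_t^{(K_t^X,s)}=A_t^{(K_t^X)}
 \qquad\text{and}\qquad
 B_t^{(K_t^Y,s)}=B_t^{(K_t^Y)}.
\]

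Fix such a $t$ and write $k=K_t^X$. On $G_s$ we have $k\le b_s$. Since the intervals $I_1,\ldots,I_s$ partition $\{1,\ldots,b_s\}$, there are two cases.

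\textbf{Case $k\in I_r$ with $r<s$.} Both \eqref{eq:provisional} and \eqref{eq:finalenv} assign $U_r$ at level $k$. Hence $A_t^{(k,s)}=U_{r,t}=A_t^{(k)}$ identically, without using $D_s$.

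\textbf{Case $k\in I_s$.} Here $k\ge a_s$, so the provisional environment is $W_s$ and the final one is $U_s$. Since $1\le t\le n_s$, on $D_s$ we get $W_{s,t}=U_{s,t}$, and the two coincide.

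The $Y$ coordinate is handled identically, using $V_r$, $V_s$ and the equality $V_{s,1:n_s}=W_{s,1:n_s}$ on $D_s$.

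The only delicate points are bookkeeping. First, the event $G_s$ rules out marker levels $k>b_s$. At those levels the provisional law still uses $W_s$ while the final law uses some later $U_{s'}$, which need not agree with $W_s$ on the window. Second, the realizations must genuinely be on a common space: the same markers and the same $U_r,V_r$ must enter both $Z^{(s)}$ and $(X^\star,Y^\star)$. This holds by the construction following \cref{rem:reverse}, which defines $U_s,V_s$ without altering $W_s$. I expect no real obstacle beyond stating this identification carefully.
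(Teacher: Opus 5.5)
Your proposal is correct and follows essentially the same argument as the paper: a coordinatewise check for each $1\le t\le n_s$. Levels $k<a_s$ use the same $U_r$ (resp.\ $V_r$) in both constructions, levels $a_s\le k\le b_s$ are matched via $D_s$, and levels $k>b_s$ are excluded by $G_s$.
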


\begin{proof}
Fix $1\le t\le n_s$. If $K_t^X<a_s$, then
$K_t^X\in I_r$ for a unique $r<s$, and both constructions use $U_r$
at time $t$. If $a_s\le K_t^X\le b_s$, the provisional construction
uses $W_s$ and the final construction uses $U_s$; these agree at time
$t$ on $D_s$. The remaining possibility $K_t^X>b_s$ is excluded by
$G_s$. The same argument applies to the $Y$ coordinate, with $V_r$
and $V_s$ in place of $U_r$ and $U_s$.
\end{proof}

Let $\mathcal H_s$ be the $\sigma$-field generated by all variables from
stages $1,\ldots,s-1$, by $W_s$, and by both complete marker
sequences. The provisional sample $Z_{1:n_s}^{(s)}$ depends only on
the completed stages, $W_s$, and the markers; therefore
$F_s\cap G_s\in\mathcal H_s$.

To justify the next conditioning identity without any hidden
independence assumption, recall the explicit realization above.
Conditional on $W_s$ (equivalently on $\beta_s$), the pair $(U_s,V_s)$ is
a measurable function of $(\beta_s,\xi_s)$ whose conditional law is the
kernel $\kappa_{\beta_s}$ from \cref{rem:reverse}. The seed $\xi_s$ is
independent of $\mathcal H_s$, while $\beta_s$ is
$\mathcal H_s$-measurable. Hence, for every bounded function $h$ of
$(U_s,V_s)$,
\[
 \E[h(U_s,V_s)\mid\mathcal H_s]
 =
 \int h(u,v)\,\kappa_{\beta_s}(d(u,v))
 \quad\text{almost surely}.
\]

The event $D_s$ also depends on $W_s$, so we make that dependence
explicit. For $b\in\{0,1\}$ define
\[
 h_b(u,v)
 =
 \1\{u_{1:n_s}=v_{1:n_s}=w^{(b)}_{1:n_s}\}.
\]
Since $W_s=w^{(\beta_s)}$,
\[
 \1_{D_s}
 =
 \sum_{b=0}^1\1_{\{\beta_s=b\}}h_b(U_s,V_s).
\]
Using the preceding conditional-expectation identity for the two fixed
functions $h_0,h_1$, and using that $\beta_s$ is
$\mathcal H_s$-measurable, we obtain
\begin{align*}
 \Pp(D_s\mid\mathcal H_s)
 &=
 \sum_{b=0}^1
 \1_{\{\beta_s=b\}}
 \int h_b(u,v)\,\kappa_b(d(u,v))\\
 &=
 \Pp(D_s\mid W_s)\\
 &\ge\frac12-\varepsilon_s
 \qquad\text{almost surely}.
\end{align*}
The last inequality is property (c) of the stage construction.

By \cref{lem:identity}, on $F_s\cap G_s\cap D_s$ the test outputs $1$
on the final sample as well. Therefore
\begin{align}
 P^\star\{\phi_{n_s}=1\}
 &\ge \Pp(F_s\cap G_s\cap D_s)\notag\\
 &=\E\!\left[\1_{F_s\cap G_s}\Pp(D_s\mid\mathcal H_s)\right]\notag\\
 &\ge\left(\frac12-\varepsilon_s\right)
       \Pp(F_s\cap G_s)\notag\\
 &>\left(\frac12-\varepsilon_s\right)
       (1-2\varepsilon_s). \label{eq:lower}
\end{align}
Since $\varepsilon_s\to0$, the preceding lower bound gives
\[
 \liminf_{s\to\infty}P^\star\{\phi_{n_s}=1\}
 \ge\frac12.
\]
Because $(n_s)$ is strictly increasing,
\[
 \limsup_{n\to\infty}P^\star\{\phi_n=1\}
 \ge
 \limsup_{s\to\infty}P^\star\{\phi_{n_s}=1\}
 \ge
 \liminf_{s\to\infty}P^\star\{\phi_{n_s}=1\}
 \ge\frac12.
\]
Together with \cref{lem:Pstar}, this is exactly
\eqref{eq:quantitative-main} and proves \cref{thm:main}.

\section{Discussion}

\Cref{thm:main} resolves negatively the stationary-ergodic
asymptotic-consistency ``Open question'' in
\cite[Sec.~5.7.3, Table~5.2]{Ryabko2019}, and in fact rules out even weak asymptotic consistency,
that is, convergence in probability of the binary decision to the correct
value. This strengthens the previous impossibility of asymmetric
($\alpha$-level) consistency for independence
\cite[Prop.~5.2]{Ryabko2019}. Quantitatively, asymptotic correctness on every
dependent law forces a false-positive limsup of at least $1/2$ on some
independent law.

The proof combines three devices: rare markers shield high environment levels
on fixed finite horizons; pairwise-coprime periodic environments preserve
ergodicity; and \cref{lem:coupling} makes an independent pair of periodic
environments agree with a common period-two environment on a prescribed block
with conditional probability arbitrarily close to $1/2$, without changing
the pair's product marginal. The rare-marker diagonalization is adapted from
\cite{Yao2026}; the independence-specific coupling keeps the limiting law in
the null while retaining the finite-horizon coincidences needed for the
contradiction. We are not aware of prior work using this coupling mechanism, in particular
to obtain the quantitative bound \eqref{eq:quantitative-main}.

\begin{remark}[Finite-dimensional convergence]\label{rem:fdconv}
For every fixed $m\ge1$, the length-$m$ marginal of $Q_s$ converges in
total variation to the corresponding marginal of $P^\star$. Indeed, couple
the two processes on the common probability space of the construction and
let
\[
 H_{s,m}=\{K_t^X<a_s,\ K_t^Y<a_s\text{ for every }1\le t\le m\}.
\]
On $H_{s,m}$, every selected marker level belongs to some $I_r$ with $r<s$,
so the provisional and final observations use the same environments $U_r$
and $V_r$ at every time $1,\ldots,m$. Hence the two length-$m$ samples agree
on $H_{s,m}$. By the coupling inequality and \eqref{eq:tails},
\[
 \left\|\mathcal L_{Q_s}\bigl((X_t,Y_t)_{t=1}^m\bigr)
 -\mathcal L_{P^\star}\bigl((X_t^\star,Y_t^\star)_{t=1}^m\bigr)
 \right\|_{\mathrm{TV}}
 \le \Pp(H_{s,m}^c)
 \le 2m\,2^{1-a_s}\longrightarrow0,
\]
where $\|\mu-\nu\|_{\mathrm{TV}}:=\sup_A|\mu(A)-\nu(A)|$ and the last
limit follows from $a_s=b_{s-1}+1\ge s$. By stationarity, every fixed finite
coordinate set can be shifted into such a block. Thus $Q_s$ converges to
$P^\star$ in finite-dimensional distributions, with total-variation
convergence on each fixed block.
\end{remark}

The impossibility already holds on a binary alphabet and therefore for
every larger model class containing the jointly stationary ergodic
binary pair laws.

\end{document}